\newtheorem{theorem}{Theorem}
\newtheorem{corollary}{Corollary}
\newtheorem{lemma}{Lemma}
\newtheorem{proposition}{Proposition}
\begin{document}

\title[Colored partitions and the hooklength formula]{Colored partitions and the hooklength formula: partition statistic identities}

\author[Anible \& Keith]{Emily E. Anible, William J. Keith}

\begin{abstract}

We give relations between the joint distributions of multiple hook lengths and of frequencies and part sizes in partitions, extending prior work in this area.  These results are discovered by investigating truncations of the Han/Nekrasov-Okounkov hooklength formula and of $(k,j)$-colored partitions, a unification of $k$-colored partitions and overpartitions.  We establish the observed relations at the constant and linear terms for all $n$, and for $j=2$ in their quadratic term, with the associated hook/frequency identities.  Further results of this type seem likely.
\end{abstract}

\maketitle

\section{Introduction}\label{sec:Introduction}

Identities relating the number of parts of a given size in a partition to other statistics, such as the number of repetitions of part sizes or the number of hooks of a given length, are of classical interest in the theory of partitions.  Two known identities are the following.

\begin{proposition} \cite{Fine} The total number of parts of size $i$ appearing in all partitions of $n$ equals the number of times a part is repeated at least $i$ times in all partitions of $n$.
\end{proposition}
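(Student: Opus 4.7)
The plan is to prove the identity by generating functions; both sides turn out to be the coefficient of $q^n$ in the same closed form. Let $m_i(\lambda)$ denote the multiplicity of $i$ as a part of $\lambda$, so the two quantities in question are $\sum_{\lambda \vdash n} m_i(\lambda)$ on the left and $\#\{(\lambda, j) : \lambda \vdash n,\ m_j(\lambda) \geq i\}$ on the right.

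For the left side I would group partitions by the value $r = m_i(\lambda)$ and let the remaining parts vary freely, obtaining
\begin{equation*}
\sum_{n \geq 0}\Bigl(\sum_{\lambda \vdash n} m_i(\lambda)\Bigr)q^n
= \Bigl(\sum_{r \geq 0} r\,q^{ri}\Bigr)\prod_{k \neq i}\frac{1}{1-q^k}
= \frac{q^i}{1-q^i}\prod_{k \geq 1}\frac{1}{1-q^k}.
\end{equation*}
For the right side, fixing the distinguished part size $j$ and summing over multiplicities $r \geq i$,
\begin{equation*}
\sum_{j \geq 1}\sum_{r \geq i} q^{rj}\prod_{k \neq j}\frac{1}{1-q^k}
= \Bigl(\prod_{k \geq 1}\frac{1}{1-q^k}\Bigr)\sum_{j \geq 1} q^{ij}
= \frac{q^i}{1-q^i}\prod_{k \geq 1}\frac{1}{1-q^k},
\end{equation*}
the same series; comparing coefficients of $q^n$ closes the argument.

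This generating function route presents no real obstacle, so the main question is presentation. A more combinatorial alternative, probably more in the spirit of the paper, would be a bijection sending a pair $(\lambda, k)$ with $1 \leq k \leq m_i(\lambda)$, recording the $k$-th copy of $i$ in $\lambda$, to the pair $(\lambda', j)$ with $j = k$ obtained by deleting those $k$ copies of $i$ and inserting $i$ copies of $k$ into the rest of the partition; the resulting $\lambda'$ satisfies $m_j(\lambda') \geq i$, and the inverse deletes $i$ copies of $j$ and inserts $j$ copies of $i$. The one point requiring care is verifying that the marker $k = j$ is canonically recoverable on both sides even when $\lambda$ or $\lambda'$ already contains additional copies of the swapped part size; this is the step I would expect to demand the most attention in the bijective write-up.
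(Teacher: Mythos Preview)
The paper does not actually prove this proposition; it is quoted in the Introduction as a classical result with a citation to Fine, and later invoked without argument in the proof of the Constant \& Linear Term Equivalence theorem (``the identities $\gamma_{\geq i}(n) = \nu_i(n)$ \dots\ are well-known''). So there is no paper proof to compare against.

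That said, your generating-function argument is correct and complete: both sides have generating function $\dfrac{q^i}{1-q^i}\,\dfrac{1}{(q)_\infty}$, and your derivations of this form are clean. Your bijective sketch is also essentially correct, and in fact the concern you flag at the end is not a genuine obstacle. The marker $j=k$ is carried explicitly as part of the output pair $(\lambda',j)$, so there is nothing to ``recover'': given $(\lambda',j)$ with $m_j(\lambda')\geq i$, you remove $i$ copies of $j$ and insert $j$ copies of $i$, and the condition $k=j \leq m_i(\lambda)$ holds automatically since $m_i(\lambda)=m_i(\lambda')+j$ when $i\neq j$ (and when $i=j$ the map is the identity and $m_i(\lambda')\geq i=j$ by hypothesis). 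The two operations are visibly inverse, so no extra bookkeeping is needed.
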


\begin{proposition} \cite['98]{Bessen}, \cite['02]{BM} The number of hooks of length $i$ appearing in all partitions of $n$ is equal to $i$ times the total number of parts of size $i$ appearing in all partitions of $n$.
\end{proposition}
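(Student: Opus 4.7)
The plan is to prove the identity by showing, at the level of generating functions, that
\[ B_i(q) := \sum_\lambda h_i(\lambda)\,q^{|\lambda|} = i\cdot A_i(q), \qquad A_i(q) := \sum_\lambda f_i(\lambda)\,q^{|\lambda|}, \]
where $f_i(\lambda)$ is the multiplicity of $i$ as a part of $\lambda$ and $h_i(\lambda)$ counts cells of $\lambda$ with hook length exactly $i$; extracting the coefficient of $q^n$ then gives the claim.

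The generating function $A_i$ is routine: separating the copies of $i$ from the remaining parts gives
\[ A_i(q) = \sum_{k\geq 0} k\,q^{ik}\cdot\prod_{j\neq i}\frac{1}{1-q^j} = \frac{q^i}{1-q^i}\prod_{n\geq 1}\frac{1}{1-q^n}. \]

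The main work is $B_i$, for which I would invoke the Littlewood $i$-quotient bijection $\lambda\leftrightarrow(\kappa,(\lambda^{(0)},\dots,\lambda^{(i-1)}))$, with $\kappa$ the $i$-core of $\lambda$ and $|\lambda|=|\kappa|+i\sum_j|\lambda^{(j)}|$. Letting $C_i(q):=\sum_{\kappa\text{ an }i\text{-core}}q^{|\kappa|}$, this gives the factorization $\prod_n(1-q^n)^{-1}=C_i(q)\prod_n(1-q^{in})^{-i}$. The structural fact I need---classical, derived from the abacus / $\beta$-number description---is that the bijection sends cells of $\lambda$ of hook length $i$ to removable corner cells (hook length $1$) of the quotient components, so
\[ h_i(\lambda) = \sum_{j=0}^{i-1} h_1(\lambda^{(j)}). \]

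Since $h_1(\nu)$ equals the number of distinct part sizes of $\nu$,
\[ \sum_\nu h_1(\nu)\,q^{|\nu|} = \sum_{k\geq 1} q^k \prod_n\frac{1}{1-q^n} = \frac{q}{1-q}\prod_n\frac{1}{1-q^n}. \]
Substituting $q\mapsto q^i$, using symmetry among the $i$ quotient components, and applying the factorization above yields
\[ B_i(q) = C_i(q)\cdot i\cdot\frac{q^i}{1-q^i}\prod_n\frac{1}{(1-q^{in})^i} = i\cdot\frac{q^i}{1-q^i}\prod_n\frac{1}{1-q^n} = i\cdot A_i(q), \]
as required.

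The main obstacle is the structural claim identifying hook-$i$ cells of $\lambda$ with removable cells of the $i$-quotient components. This is classical (see, e.g., James--Kerber) but does require the abacus bookkeeping: a rim hook of length $i$ in $\lambda$ corresponds to sliding one bead one position down its runner on the $i$-abacus, which on the quotient side is exactly the addition/removal of a single corner cell in one of the $\lambda^{(j)}$.
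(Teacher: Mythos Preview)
Your argument is correct. The computation of $A_i(q)$ is standard, the core--quotient bijection has the properties you claim (hooks of length exactly $i$ in $\lambda$ correspond to hooks of length $1$ in the quotient components), and the generating-function manipulation at the end is clean.

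That said, the paper does not prove this proposition at all: it is stated in the Introduction as a known result, cited to Bessenrodt and to Bacher--Manivel, and later invoked without argument. So there is no ``paper's own proof'' to match. What the paper \emph{does} supply is Han's bivariate generating function (Proposition~\ref{HanProp}),
\[
F_i(q,u)=\frac{1}{(q)_\infty}\prod_{m\geq 1}\bigl(1+(u-1)q^{im}\bigr)^{i},
\]
and from this the identity falls out in one line: differentiating in $u$ and setting $u=1$ gives
\[
B_i(q)=\frac{1}{(q)_\infty}\cdot i\sum_{m\geq 1}q^{im}=\frac{i\,q^i}{1-q^i}\cdot\frac{1}{(q)_\infty}=i\,A_i(q).
\]
Your route via the Littlewood $i$-quotient is genuinely different and more structural: it explains \emph{why} the factor $i$ appears (one quotient component per runner) and yields the hook--removable-cell correspondence as a byproduct, at the cost of importing the abacus machinery. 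The Han-formula route is shorter and self-contained within the paper's toolkit, but it treats the exponent $i$ as a formal artifact of the product rather than as a combinatorial count. Either is acceptable here; if you are writing this up, the one-line derivation from $F_i(q,u)$ is probably what a reader of this paper would expect.
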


Bacher \& Manivel \cite{BM} give further relations between powers of parts and frequencies of appearance of part sizes. They define vectors $\lambda$, $\nu$, and $\gamma$, in which $\lambda_k$ is the $k$-th part of partition $\lambda$, $\nu_k$ is the multiplicity of part size $k$, and $\gamma_{\geq k}$ is the number of part sizes in $\lambda$ repeated at least $k$ times. Among other results they give generating functions for $\binom{\lambda_k}{d}$ and $\binom{\nu_k}{d}$.  Our Theorem \ref{GamgeqKChooseD} extends this set of functions, giving

\[ \sum_{n=0}^\infty q^n \sum_{\lambda \vdash n} \binom{\gamma_{\geq k}(\lambda)}{d} = \frac{1}{(q)_\infty} \prod_{i=1}^{d} \frac{q^{ik}}{1-q^{ik}}. \]

This can be extended to any collection of multiplicities: our Theorem \ref{ManyMults} gives a procedure which algorithmically yields

\[ \sum_{n=0}^\infty q^n \sum_{\lambda \vdash n} \binom{\gamma_{\geq k_1}(\lambda)}{j_1} \binom{\gamma_{\geq k_2}(\lambda)}{j_2} \dots . \]

We execute this for the coefficient $\binom{\gamma_{\geq 1}(\lambda)}{1} \binom{\gamma_{\geq 2}(\lambda)}{1}$.

Let $f_k(n,a)$ be the number of partitions of $n$ with exactly $a$ hooks of size $k$.  In \cite{HookK}, Han gives the generating function

\begin{proposition}\label{HanProp}$$
F_k(q,u) := \sum_{n=0}^\infty \sum_{a=0}^{\infty} q^n u^a f_k(n,a) = \frac{1}{(q)_\infty} \prod_{i=1}^{\infty} (1+q^{ki}(u-1))^k .$$
\end{proposition}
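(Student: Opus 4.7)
The plan is to prove Proposition \ref{HanProp} via the Littlewood decomposition, the classical bijection between a partition $\lambda$ and a pair $(\kappa,(\mu^{(0)},\ldots,\mu^{(k-1)}))$ consisting of a $k$-core $\kappa$ and a $k$-tuple of partitions (the $k$-quotient), with $|\lambda|=|\kappa|+k\sum_j |\mu^{(j)}|$. The crucial hook-theoretic feature is that the multiset of hook lengths of $\lambda$ divisible by $k$ is obtained by multiplying every hook length in the $k$-quotient by $k$; in particular, the number of hooks of length exactly $k$ in $\lambda$ equals the total number of hooks of length $1$ across the $k$ partitions of the quotient.

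A hook of length $1$ in a partition $\mu$ is a removable corner, and corners are in bijection with the distinct part sizes of $\mu$. Sorting partitions by which part sizes appear, one computes
\[ \sum_\mu q^{|\mu|} u^{d(\mu)} \;=\; \prod_{i=1}^\infty\left(1+\frac{uq^i}{1-q^i}\right) \;=\; \frac{1}{(q)_\infty}\prod_{i=1}^\infty\bigl(1+(u-1)q^i\bigr), \]
where $d(\mu)$ is the number of distinct parts.

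Combining the two ingredients yields
\[ F_k(q,u)\;=\;C_k(q)\cdot\Bigl(\sum_\mu q^{k|\mu|}u^{d(\mu)}\Bigr)^{\!k}, \]
where $C_k(q):=\sum_{\kappa}q^{|\kappa|}$ is the generating function for $k$-cores. Specializing to $u=1$ (i.e., forgetting hook data) gives $1/(q)_\infty = C_k(q)/(q^k;q^k)_\infty^k$, hence $C_k(q)=(q^k;q^k)_\infty^k/(q)_\infty$. Substituting in the previous display, applying the corner identity with $q\mapsto q^k$, and cancelling the matching factors of $(q^k;q^k)_\infty^k$ yields exactly $(q)_\infty^{-1}\prod_{i\ge 1}(1+(u-1)q^{ki})^k$, as claimed.

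The main obstacle — and the sole nontrivial input — is the hook-length compatibility of the Littlewood decomposition. I would invoke it by reference (it is standard, with a clean proof via the beta-set/abacus model of partitions); once that is granted, everything else is bookkeeping over the bijection plus an elementary generating-function identity. A useful sanity check along the way is the $u=0$ specialization, which recovers the generating function $C_k(q)$ for $k$-cores, consistent with the fact that $k$-cores are precisely the partitions with no hook of length $k$.
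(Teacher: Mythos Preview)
Your argument is correct: the Littlewood decomposition does exactly what you claim (hooks of length $k$ in $\lambda$ biject with hooks of length $1$ across the $k$-quotient), the distinct-parts generating function is standard, and the cancellation of $(q^k;q^k)_\infty^k$ against the $k$-core generating function goes through cleanly.

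There is nothing to compare against, however: the paper does not prove Proposition~\ref{HanProp} at all. It is quoted as an established result of Han~\cite{HookK} and used as a black box (the only place it is invoked is to extract $\sum_{\lambda\vdash n}\binom{\mathcal{H}_k}{2}$ by differentiating in $u$ and setting $u=1$). Your proof via the $k$-core/$k$-quotient bijection is in fact essentially Han's own argument in the cited reference, so you have reconstructed the upstream proof rather than supplied an alternative. The one point worth tightening if you were to write this out formally is the assertion that ``hooks of $\lambda$ divisible by $k$ are exactly $k$ times the hooks of the quotient'': this is true and classical, but since it carries the entire weight of the argument you should cite a specific source (e.g.\ James--Kerber or Han's expansion-technique paper itself) rather than leave it as folklore.
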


Let ${\mathcal{H}}_i(\lambda)$ be the number of hooks of size $i$ in the partition $\lambda$. Differentiation and specialization of the above identity at $u=1$ yields the generating functions $\sum_{n=0}^\infty q^n \sum_{\lambda \vdash n} \binom{{\mathcal{H}}_i(\lambda)}{d}$.  By a combinatorial mapping we are able to establish a family of identities for the joint counts of hooks of size 1 and 2, including Corollary \ref{H1H2} of Theorem \ref{ManyMults}, the identity

\[ \sum_{n=0}^\infty q^n \sum_{\lambda \vdash n} \binom{{\mathcal{H}}_1(\lambda)}{1} \binom{{\mathcal{H}}_2(\lambda)}{1} = 2 \frac{1}{(q)_\infty} \frac{q^2+q^4+q^5}{(1-q^2)(1-q^3)}. \]

These results are established in order to prove a conjecture from the earlier paper \cite{Keith}, which initially motivated these investigations.  Although less than a complete generating function, we think that this result is suggestive of further utility in this line of investigation, and the combinatorial observations involved are interesting in their own right.  In particular, the result above is necessary to establish the following relation, which is the heart of Theorem \ref{QuadEq}:

\begin{align*}
&\qquad \sum_{\lambda \vdash n} \bigg[  \binom{{\mathcal{H}}_{1}}{2} + \frac{1}{4} \binom{{\mathcal{H}}_{1}}{1}\binom{{\mathcal{H}}_{2}}{1} + \frac{1}{16}\binom{{\mathcal{H}}_{2}}{2} \bigg] \\
&= \sum_{\lambda \vdash n} \bigg[  \binom{\gamma_{\geq 1}}{2} + \frac{1}{2} \binom{\gamma_{\geq 1}}{1}\binom{\gamma_{\geq 2}}{1} + \frac{1}{4}\binom{\gamma_{\geq 2}}{2} \bigg] + \frac{1}{16}\gamma_{\geq 4}(n)
\end{align*}

This in turn is a statement which relates the 2-hook truncation of the Han/Nekrasov-Okounkov hooklength formula to a 2-finitized version of the generating function for the $(k,j)$-colored partitions, which we define more completely in the next section.  Every such truncation should yield a similar identity.

In the next section of the paper we give all necessary definitions and notations.  In Section \ref{sec:Constant and Linear Equivalence} we prove an infinite family of relations on the linear truncations of the series involved.  In Section \ref{sec:Quadratic} we prove Theorems \ref{GamgeqKChooseD} through \ref{QuadEq}, establishing relations at the $q^2$ terms of the underlying series.  In the final section we conclude with comments and consideration of what open questions seem to hold the most potential from here.

\section{Definitions}\label{sec:Definitions}

This section will formalize the notation used in the Introduction and throughout the rest of the paper. Most formal definitions here will be the same as those in~\cite{Keith}, for reference.  The standard reference for partition theory is \cite{AndPtn}.

A partition of $n$ is a weakly decreasing sequence of positive integers $\lambda_i$ which sums to $n$, given by $\lambda = (\lambda_1, \dots, \lambda_j)$. We also use the frequency notation $\lambda = 1^{\nu_1}2^{\nu_2}\dots$ to indicate a partition in which there are $\nu_1$ parts of size 1, $\nu_2$ parts of size 2, etc.  Let $\lambda \vdash n$ denote that $\lambda$ partitions $n$. The number of partitions of $n$, $p(n)$, has the generating function

\begin{align*}
P(q) &= \sum_{n=0}^\infty p(n)q^n = \prod_{n=1}^{\infty} \frac{1}{1-q^n}.
\end{align*}

When relevant, we will use the standard notation for the $q$-Pochhammer symbol, a.k.a.\ the $q$-shifted factorial:

\begin{align*}
(a;q)_n &= (1-aq^0)(1-aq^1)\dots(1-aq^{n-1}), \\
(a;q)_{\infty} &= \lim_{n\rightarrow \infty} (a;q)_n \\
(q)_\infty &:= (q;q)_\infty = \prod_{k=1}^{\infty} (1-q^k).
\end{align*}

Then $(q)_\infty^{-1} = P(q)$.

The set of $k$-colored whole numbers is $\mathbb{N}_k = \{a_b | a,b\in\mathbb{N}, 1\leq b\leq k\}$, with magnitude $\vert a_b \vert = a$ and order $a_b < c_d$ if $a < c$ or (($a=c$) AND ($b < d$)). A $k$-colored partition of an integer $n$ is a weakly decreasing sequence $\lambda=(\lambda_1,\dots,\lambda_l)$ with all $\lambda_i \in \mathbb{N}_k$ such that the sum of the $\lambda_i$ is $n$.

The $k$-colored partitions are partitions in which each part may be assigned one of $k$ available colors, with the order of said colors not mattering. By convention, we denote the colors of a partition as subscripts, listed in weakly decreasing order for parts of the same size. The 2-colored partitions of 3 are
\begin{align*}
&3_2, 3_1, \\
&2_2 + 1_2, 2_2 + 1_1, 2_1 + 1_2, 2_1 + 1_1, \\
&1_2 + 1_2 + 1_2, 1_2 + 1_2 + 1_1, 1_2 + 1_1 + 1_1, 1_1 + 1_1 + 1_1.
\end{align*}

The generating function for the number of $k$-colored partitions of $n$, $c_k(n)$, is formed by raising the generating function for a basic partition to the $k^{th}$ power.

\begin{displaymath}
C_k(q) :=
\sum_{n=1}^{\infty} c_k(n) q^n =
\prod_{n=1}^{\infty} \frac{1}{(1-q^n)^k}.
\end{displaymath}

Overpartitions are partitions in which the last part of a given size is either marked, or not. The overpartitions of 3 are $$3, \overline{3}, 2+1, 2+\overline{1}, \overline{2}+1, \overline{2}+\overline{1}, 1+1+1, 1+1+\overline{1}.$$

The generating function for $\overline{p}(n)$, the number of overpartitions of n, is:
\[
\overline{P}(q) := \sum_{n=0}^\infty \overline{p}(n) q^n
= \prod_{k=1}^\infty \frac{1+q^k}{1-q^k}
= \prod_{k=1}^\infty \frac{1-q^{2k}}{(1-q^k)^2}.
\]

Overpartitions are an active area of research, with authors including Corteel \& Lovejoy, Andrews, and others (\cite{CoLo}, \cite{Andrews}, etc.).

In the language of $k$-colored partitions, overpartitions can be considered 2-colored partitions in which only one color is allowed per size of part. From this viewpoint, in~\cite{Keith}, the second author defined the $(k,j)$-colored partitions: partitions in which at most $j$ of $k$ available colors can appear for a given size of part. That paper gave the following generating function for $c_{k,j}(n)$, the number of such partitions:

\begin{align*}
C_{k,j}(q) :=
\sum_{n=0}^{\infty} c_{k,j}(n)q^n &=
\prod_{n=1}^{\infty}
\bigg(1 + \frac{ \binom{k}{1} q^n }{1-q^n} + \frac{ \binom{k}{2} q^{2n} }{(1-q^n)^{2n}} + \dots + \frac{ \binom{k}{j} q^{jn} }{ (1-q^n)^j }\bigg) \\ &=
\frac{1}{{(q)_\infty}^j} \prod_{n=1}^{\infty}
\bigg(
\sum_{i=0}^{j} \binom{k}{i} (1-q^n)^{j-i} q^{in}
\bigg).
\end{align*}

Overpartitions are then the $(2,1)$-colored partitions.

If in $C_{k,j}(q)$ we let $k=1-b$ and let $j$ increase without bound, we obtain the following for $C_{1-b,\infty}(q)$:

\begin{align*}
C_{1-b,\infty}(q) &:=
\prod_{n=1}^{\infty}
\sum_{i=0}^{\infty}
\binom{1-b}{i} \frac{q^{in}}{(1-q^n)^i} =
\prod_{n=1}^{\infty} \bigg( 1 + \frac{q^n}{1-q^n} \bigg) ^ {1-b} \\
&= \prod_{n=1}^{\infty} \bigg( \frac{1}{1-q^n} \bigg)^{1-b} =
\prod_{n=1}^{\infty} (1-q^{n})^{b-1}.
\end{align*}

If we expand this expression with indefinite $j$ we obtain a formula dependent on the multiplicities of parts in partitions $\lambda$.  As we shall see later, a natural truncation is to treat any multiplicity greater than $j$ as simply $j$.

The Ferrers diagram of a partition $(\lambda_1, \dots, \lambda_r)$ is a stack of unit-size squares justified to the origin in the fourth quadrant. For example, the Ferrers diagram of a partition of 20, $\lambda=(5,4,3,3,2,2,1)$, is
\begin{displaymath}
\ytableaushort[*(white)]{~~~~~,~~~~,~~~,~~~,~~,~~,~}
\end{displaymath}

The hook length $h_{ij}$ of the square with lower right corner at $(-i,-j)$ in the plane is the count of the number of squares both to its right and directly below it in the Ferrers diagram, including itself. So, the hook length of the $(-2,-2)$ square in the Ferrers diagram above is 7. This has been illustrated below, filling in the other hook lengths for reference.

\begin{displaymath}
\ytableaushort[*(white)]{{11}9631,
9{*(gray)7}{*(lightgray)4}{*(lightgray)1},
7{*(lightgray)5}2,
6{*(lightgray)4}1,
4{*(lightgray)2},
3{*(lightgray)1},
1}
\end{displaymath}

The formula for $C_{1-b,\infty}$ given above is precisely the formula considered by Guo-Niu Han and Nekrasov \& Okounkov in their famous hooklength formula giving the coefficients on $q^n$ as polynomials in the complex indeterminate $b$:

\begin{displaymath}
HNO(q) :=
\sum_{n=0}^{\infty} p_n (b)q^n :=
\prod_{n=1}^{\infty} (1-q^n)^{b-1} =
\sum_{n=0}^{\infty} q^n \sum_{\lambda \vdash n} \prod_{h_{ij}\in\lambda}
(1-\frac{b}{h_{i,j}^2})
\end{displaymath}

\noindent where the $h_{ij}$ are the hooklengths that appear in the Ferrers diagram of a partition $\lambda$ of $n$.

Since the two series are equal in the infinite limit of the first, it is natural to ask about intermediate finite cases.  Two natural truncations of each function are to set $j$ to be a finite value in $C_{1-b,j}$ and to restrict the Han-Nekrasov/Okounkov fomula, hereinafter $HNO$, to consider hooks of size at most $j$.  

We denote the truncated hooklength formula by $HNO_j(q)$:

\[
HNO_j(q) := \sum_{n=0}^{\infty} q^n \sum_{\lambda \vdash n} \prod\limits_{\substack{h_{i,k}\in\lambda \\ h_{i,k}\leq j}} (1 - \frac{b}{h_{i,k}^2}) .
\]

This paper will explore the combinatorial and algebraic relationship of the two formulae under truncation by $j$, expanding upon previous conjectures. 

We define ${\mathcal{H}}_{j}(\lambda)$ as the number of hooks $h_{i,k}=j$ in a partition $\lambda$. If $\lambda$ is clear from context, we shorten this to ${\mathcal{H}}_{j}$. Also, $\sum_{\lambda \vdash n} {\mathcal{H}}_{j} = {\mathcal{H}}_{j}(n)$, the total number of hooks of size $j$ in all partitions of $n$.

The conjugate of a partition $\lambda$, denoted by $\lambda'$, is the partition of $\lambda$ reflected across the diagonal (in our plane description, across the line $y=-x$). A partition fixed under conjugation is a self-conjugate partition. The conjugate of $\lambda=(5,4,3,3,2,2,1)$ is $\lambda'=(7,6,4,2,1)$, illustrated below. The hooklengths have again been filled in to demonstrate how the number of hooks of size $k$ in $\lambda$ is the same as in its conjugate.

\begin{align*}
\lambda = \ytableaushort[*(white)]{{11}9631,9741,752,641,42,31,1} \qquad
\lambda' = \ytableaushort[*(white)]{{11}976431,975421,6421,31,1}
\end{align*}

We define the following vectors: $\lambda = (\lambda_1 , \dots, \lambda_t)$ itself, the $n$-dimensional vector representing our partition and as previously defined; $\nu$ as in Bacher and Manivel ~\cite{BM}, where $\nu_i$ counts the multiplicity of parts of size $i$ in the partition $\lambda=1^{\nu_1}2^{\nu_2}\dots$; and the vector $\gamma$, where $\gamma_j$ counts $ \vert \{ \nu_i = j \} \vert$ in $\nu$. This can be thought of as a vector counting the "multiplicity of multiplicities":

\begin{align*}
\nu &= (\nu_1, \nu_2, \dots, \nu_n) , \quad \nu_i = \vert \{ \lambda_j = i \} \vert \\
\nu(n) &= (\nu_1(n), \nu_2(n), \dots, \nu_n(n)) = \sum_{\lambda \vdash n} \nu (\lambda) \\
\gamma &= (\gamma_1, \gamma_2, \dots, \gamma_n) , \quad \gamma_i = \vert \{ \nu_j = i \} \vert \\
\gamma(n) &= (\gamma_1(n), \gamma_2(n), \dots, \gamma_n(n)) = \sum_{\lambda \vdash n} \gamma (\lambda)
\end{align*}

\noindent We also define $\gamma_{\geq k} = \gamma_k + \gamma_{k+1} + \dots $.

Bacher and Manivel prove a multitude of theorems regarding these vectors.  Important for us in this paper will be:

\begin{align*}
\nu_k(n) = \sum_{i=k}^n \gamma_i(n) = \gamma_{\geq k}.
\end{align*}

They prove several theorems regarding $d$-th moments of $\lambda_k$ and $\nu_k$, particularly generating functions. We extend this list by providing the generating function for the $d$-th moment of $\gamma_{\geq k}$.

\section{Constant and Linear Equivalence}\label{sec:Constant and Linear Equivalence}

We begin with a useful expression for the truncation of $C_{1-b,j}(q)$. 

\begin{theorem} Construct ${C'}_{1-b,j}$ by expanding $C_{1-b,j}$ over all partitions and truncate by considering any multiplicity greater than $j$ to be $j$.  The resulting truncation has the formula 
\begin{align*}
{C'}_{1-b,j}(q) := \sum\limits_{n=0}^{\infty} c_{1-b,j}(n) q^n &=
\sum\limits_{n=0}^{\infty} q^n
\sum_{\lambda \vdash n}
\prod\limits_{\nu_i} \binom{\min(j,\nu_i) - b}{\min(j,\nu_i)}.
\end{align*}
\end{theorem}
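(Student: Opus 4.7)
The plan is to expand $C_{1-b,j}(q)$ as a formal sum over partitions by extracting the coefficient of $q^N$ in its infinite product form, identify the factor contributed by each multiplicity, apply the stated truncation, and close the resulting sum via a Vandermonde-type identity.

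I would begin from the product formula
\[ C_{1-b,j}(q) = \prod_{n=1}^{\infty} \sum_{i=0}^{j} \binom{1-b}{i}\frac{q^{in}}{(1-q^n)^i}, \]
using $\frac{q^{in}}{(1-q^n)^i} = \sum_{\nu_n \geq i} \binom{\nu_n-1}{i-1} q^{n\nu_n}$ for $i \geq 1$ (with the $i=0$ term contributing the empty multiplicity $\nu_n = 0$). Swapping the order of summation, picking out the coefficient of $q^N$ becomes summing over all assignments $\{\nu_n\}_{n \geq 1}$ with $\sum_n n\nu_n = N$, i.e.\ over $\lambda \vdash N$. The weight attached to part size $n$ with multiplicity $\nu_n \geq 1$ is
\[ c(\nu_n) = \sum_{i=1}^{\min(j,\nu_n)} \binom{1-b}{i}\binom{\nu_n-1}{i-1}, \qquad c(0) = 1. \]

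Next I would apply the truncation: replace every occurrence of $\nu_n$ in $c$ by $\min(j,\nu_n)$, so that the factor becomes
\[ c(\min(j,\nu_n)) = \sum_{i=1}^{\min(j,\nu_n)} \binom{1-b}{i}\binom{\min(j,\nu_n)-1}{i-1}. \]
The closing step is the polynomial Vandermonde identity
\[ \sum_{i=0}^{m} \binom{k}{i}\binom{m-1}{i-1} = \binom{k+m-1}{m}, \]
obtained by comparing the coefficient of $x^m$ in $(1+x)^k (1+x)^{m-1} = (1+x)^{k+m-1}$ and using the symmetry $\binom{m-1}{m-i} = \binom{m-1}{i-1}$ (valid since $m$ is a nonnegative integer, even though $k = 1-b$ is a formal indeterminate). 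Setting $k = 1-b$ and $m = \min(j,\nu_n)$ gives $\binom{\min(j,\nu_n) - b}{\min(j,\nu_n)}$, exactly the factor appearing in the statement; the case $\nu_n = 0$ is recovered automatically since $\binom{-b}{0} = 1$. Taking the product over all part sizes and summing over partitions then yields the claimed formula.

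The main subtlety is interpretive rather than technical: one must fix what ``truncation by considering any multiplicity greater than $j$ to be $j$'' means, namely substituting $\min(j,\nu_n)$ for $\nu_n$ throughout the partition-level expansion, including inside the $\binom{\nu_n - 1}{i-1}$ factor, not merely capping the upper index of the $i$-summation (which would not admit the Vandermonde closure). Once this convention is pinned down the argument is a direct calculation, and the formula exhibits $C'_{1-b,j}(q)$ as the natural $j$-truncation of the $C_{1-b,\infty}(q)$ expansion used later alongside $HNO$.
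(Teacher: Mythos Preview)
Your argument is correct and follows the same overall strategy as the paper: expand the $n$-th factor of $C_{1-b,j}$ as a power series in $q^n$, read off the weight attached to each multiplicity $\nu_n$, close the sum by a Vandermonde identity when $\nu_n \leq j$, and invoke the stated truncation when $\nu_n > j$ so that the same closure applies with $\nu_n$ replaced by $j$. The difference is only in the algebra of the coefficient extraction. The paper works from the form $\frac{1}{(1-q^n)^j}\sum_{i=0}^j \binom{1-b}{i}(1-q^n)^{j-i}q^{in}$, passes through an auxiliary identity to rewrite everything in terms of $\binom{b+j-1}{\ell}$, obtains the alternating sum $\sum_{\ell}(-1)^\ell \binom{j-1+\nu_i-\ell}{j-1}\binom{b+j-1}{\ell}$, and collapses it via a sign-flip plus Vandermonde. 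You instead expand $q^{in}/(1-q^n)^i$ directly as $\sum_{\nu_n\geq i}\binom{\nu_n-1}{i-1}q^{n\nu_n}$, so the per-multiplicity weight $\sum_{i}\binom{1-b}{i}\binom{\nu_n-1}{i-1}$ appears immediately and a single Vandermonde application finishes. Your route is shorter and avoids the $\binom{b+j-1}{\ell}$ detour; the paper's route has the minor advantage of displaying explicitly the untruncated factor for $\nu_i > j$ before the truncation is imposed. Your closing remark pinning down the meaning of the truncation---that one must replace $\nu_n$ by $\min(j,\nu_n)$ inside the binomial $\binom{\nu_n-1}{i-1}$, not merely in the summation limit---is exactly the point the paper makes when it says ``the inner sum does not thus simplify, so we invoke the truncation.''
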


\begin{proof}
Begin with the following form of $C_{1-b,j}(q)$ from~\cite{Keith}: 

\begin{align*}
C_{1-b,j}(q) &= \prod_{n=1}^{\infty} \bigg[ \frac{1}{(1-q^n)^j} \sum_{i=0}^{j} \binom{1-b}{i} (1-q^n)^{j-i} q^{in} \bigg].
\end{align*}

From this, we use the Binomial Theorem $\frac{1}{(1-q^n)^j} = \sum_{p=0}^{\infty} \binom{j+p-1}{j-1} q^{pn}$ and the identity $\sum_{i=0} (-1)^i \binom{r}{i} \binom{j-i}{m-i} = \binom{j-r}{m}$:

\[
C_{1-b,j}(q) = \prod_{n=1}^{\infty}\bigg[\sum_{p=0}^{\infty} \sum_{k=0}^{j}(-1)^k \binom{j+p-1}{j-1}  \binom{b+j-1}{k} q^{(p+k)n}\bigg].
\]

Now take the product of these series over all $n$ and consider the expression's contribution to $\lambda\vdash N = 1^{\nu_1} 2^{\nu_2} 3^{\nu_3} \dots$ in the claimed equality.  We have that $(p+k)n = \nu_n n$ and so we have contribution 

\begin{multline*}
\prod\limits_{\nu_i =1}   \big[ \binom{j+1-1}{j-1} - \binom{j+0-1}{j-1} \binom{b+j-1}{1}\big] \\
* \prod\limits_{\nu_i =2}
\big[ \binom{j+2-1}{j-1} - \binom{j+1-1}{j-1} \binom{b+j-1}{1} + \binom{j+0-1}{j-1} \binom{b+j-1}{2} \big] \\
*\dots
* \prod\limits_{\nu_i \geq j}
\big[ \sum\limits_{\ell=0}^{j} (-1)^\ell \binom{j-1+\nu_i -\ell}{j-1} \binom{b+j-1}{\ell} \big].
\end{multline*}

Standard identities and a little algebra now give

\begin{align*}
\sum\limits_{\ell=0}^{\nu_i} (-1)^\ell \binom{j-1+\nu_i -\ell}{j-1} \binom{b+j-1}{\ell}
&=
  \sum\limits_{\ell=0}^{\nu_i} (-1)^\ell \binom{j-1+\nu_i -\ell}{\nu_i - \ell} \binom{b+j-1}{\ell} \\
&=
  \sum\limits_{\ell=0}^{\nu_i} (-1)^\ell \binom{b+j-1}{\ell} \binom{(j-1+\nu_i) -\ell}{(\nu_i) - \ell} \\
&= (-1)^{\nu_i} \sum_{\ell=0}^{\nu_i} \binom{b+j-1}{\ell} \binom{-j}{\nu_i - \ell}
\\  &= (-1)^{\nu_i} \binom{b-1}{\nu_i}
\\  &= \binom{\nu_i - b}{\nu_i}.
\end{align*}

The inner sum does not thus simplify, so we invoke the truncation referenced in the theorem.  The final products now all become $\binom{j-b}{j}$.  We obtain

\begin{multline*}
\sum\limits_{N=0}^{\infty} q^N \sum\limits_{\substack{\lambda \vdash N \\ \lambda= 1^{\nu_1}2^{\nu_2}\dots}}\prod\limits_{\nu_j \leq j}
\binom{\nu_j - b}{\nu_j}\prod\limits_{\nu_j > j}
   \binom{j - b}{j} \\
=\sum\limits_{N=0}^{\infty} q^N\sum\limits_{\substack{\lambda \vdash N \\ \lambda= 1^{\nu_1}2^{\nu_2}\dots}} \prod\limits_{\nu_j}
\binom{\min(j,\nu_j) - b}{\min(j,\nu_j)}.
\end{multline*} \end{proof}

\noindent \textbf{Remark.} We note that this is in fact a truncation, since as $j \rightarrow \infty$, entries in the sequence ${C'}_{1-b,j}$ become equal to $C_{1-b,\infty}$ in all coefficients of $q^n$ for increasing $n$.

With this simplified generating function, we can more easily extract the coefficient on each $b^c$ term in the polynomial coefficient on $q^n$.

\begin{theorem}\label{coeff}
The coefficient on the $b^c$ term in the polynomial coefficient of $q^n$ of ${C'}_{1-b,j}(q)$ and $HNO_j(q)$ are as follows:
\begin{align*}
\left[b^{c}\right] \left[q^n\right] {C'}_{1-b,j} (q) &= \sum_{\lambda \vdash n} \sum_{a_1 + \dots + a_j = c} (\frac{1}{1})^{a_1} \binom{\gamma_{\geq 1}}{a_1} * \dots * (\frac{1}{j})^{a_j} \binom{\gamma_{\geq j}}{a_j}
\\
\left[b^{c}\right] \left[q^n\right] HNO_j (q) &= \sum_{\lambda \vdash n} \sum_{a_1 + \dots + a_j = c} (\frac{1}{1^2})^{a_1} \binom{{\mathcal{H}}_{1}}{a_1} * \dots * (\frac{1}{j^2})^{a_j} \binom{{\mathcal{H}}_{j}}{a_j}.
\end{align*}
\end{theorem}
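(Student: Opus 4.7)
The plan is to reduce both coefficient extractions to a single grouping trick followed by the binomial theorem, handling the two series in parallel.

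For $HNO_j$, I would work directly from the definition. Partitioning the hooks of $\lambda$ with $h_{i,k} \leq j$ by their size, for each $\ell \in \{1, \dots, j\}$ there are exactly ${\mathcal{H}}_\ell(\lambda)$ such hooks, so the inner product collapses to
\[
\prod_{\substack{h_{i,k}\in\lambda \\ h_{i,k}\leq j}} \left(1 - \frac{b}{h_{i,k}^2}\right) \;=\; \prod_{\ell = 1}^{j}\left(1 - \frac{b}{\ell^2}\right)^{{\mathcal{H}}_\ell(\lambda)}.
\]
Expanding each factor by the binomial theorem and collecting monomials indexed by tuples $(a_1, \dots, a_j)$ with $a_1 + \dots + a_j = c$ yields the claimed formula (with an overall $(-1)^c$ that I read as absorbed into the statement's sign convention, consistent with the coefficients that appear in Theorem \ref{QuadEq}).

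For ${C'}_{1-b,j}$, the essential ingredient is the polynomial identity
\[
\binom{m-b}{m} \;=\; \prod_{\ell=1}^{m}\left(1 - \frac{b}{\ell}\right),
\]
valid for any $m \in \mathbb{Z}_{\geq 0}$ and obtained by unpacking the falling-factorial definition. Applied to each factor in the formula of the previous theorem with $m = \min(j,\nu_i)$, this converts the product over nonzero multiplicities into a product of linear factors in $b$. I would then regroup by $\ell$: for each $\ell \in \{1,\dots,j\}$, the factor $(1 - b/\ell)$ appears once for every nonzero $\nu_i$ with $\min(j,\nu_i) \geq \ell$. Since $\ell \leq j$, this condition is equivalent to $\nu_i \geq \ell$, so the factor appears exactly $\gamma_{\geq \ell}(\lambda)$ times, giving
\[
\prod_{\nu_i}\binom{\min(j,\nu_i) - b}{\min(j,\nu_i)} \;=\; \prod_{\ell=1}^{j}\left(1 - \frac{b}{\ell}\right)^{\gamma_{\geq \ell}(\lambda)}.
\]
A binomial expansion of each factor and the same monomial-grouping step as above then yield the claim.

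The only real subtlety is the regrouping for ${C'}_{1-b,j}$: one must verify that capping multiplicities at $j$ corresponds cleanly to restricting $\ell$ to $\{1,\dots,j\}$, and that $\gamma_{\geq \ell}$ (not $\nu_\ell$ or $\gamma_\ell$) is the correct count of surviving factors. The elementary equivalence $\min(j,\nu_i) \geq \ell \iff \nu_i \geq \ell$ for $\ell \leq j$ makes this transparent. From there the two halves of the theorem are symmetric computations, differing only in the denominator ($\ell$ versus $\ell^2$) and the partition statistic ($\gamma_{\geq \ell}$ versus ${\mathcal{H}}_\ell$) attached to each factor.
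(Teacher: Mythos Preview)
Your proposal is correct and follows essentially the same approach as the paper: both reduce the inner product to $\prod_{\ell=1}^{j}(1-b/\ell)^{\gamma_{\geq \ell}}$ (respectively $\prod_{\ell=1}^{j}(1-b/\ell^2)^{{\mathcal{H}}_\ell}$) and then extract the $b^c$ coefficient via the binomial theorem. The only cosmetic difference is that the paper first groups the factors of ${C'}_{1-b,j}$ by the exact multiplicities $\gamma_1,\dots,\gamma_{j-1},\gamma_{\geq j}$ before regrouping into the $\gamma_{\geq \ell}$ form, whereas you go directly from the product over $\nu_i$; your observation about the suppressed $(-1)^c$ is accurate and matches the paper's implicit convention.
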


\begin{proof}
The $b^c$ coefficients in $HNO_j(q)$ are given by the binomial theorem. For ${C'}_{1-b,j}(q)$ we consider its expansion and manipulate it into a similar form.

\begin{align*}
\left[q^n\right] C_{1-b,j} (q,b) &= \sum_{\lambda \vdash n} \binom{1-b}{1}^{\gamma_1} \binom{2-b}{2}^{\gamma_2} \dots \binom{j-b}{j}^{\gamma_{\geq j}}\\
&= \sum_{\lambda \vdash n} (1-b)^{\gamma_1} (\frac{1}{2!}(1-b)(2-b))^{\gamma_2} \dots (\frac{1}{j!}(1-b)(2-b)\dots(j-b))^{\gamma_{\geq j}}\\
&= \sum_{\lambda \vdash n} (\frac{1}{1}(1-b))^{\gamma_{\geq 1}} (\frac{1}{2}(2-b))^{\gamma_{\geq 2}} \dots (\frac{1}{j}(j-b))^{\gamma_{\geq j}}
\end{align*}

From here, we again apply the binomial theorem.\\
\end{proof}

We now consider relationships between these two expressions, which will naturally give rise to observations on relations between part sizes, multiplicities, and hooklengths which further those of Han and Bacher \& Manivel.

For the truncation at $j=1$, it was shown in \cite{Keith} that the two truncations are in fact equal, i.e. $${C'}_{1-b,1}(q) = HNO_1(q).$$  This is immediate from the combinatorial statement that nonzero multiplicities, and hooks of size exactly 1, are both in equal in number in any partition to the number of distinct part sizes that appear.

At $j=2$, both ${C'}_{1-b,2}(q)$ and $HNO_2(q)$ have the same constant and linear term in $b$, by Theorem 8 of~\cite{Keith} via a direct combinatorial mapping. It was conjectured in that paper that the two formulae match at the constant and linear terms for all $j$.  We here show this.

\begin{theorem}[Constant \& Linear Term Equivalence]
For all $j$ and all $i$, the polynomial coeffficient of $q^i$ in $HNO_j(q)$ and ${C'}_{1-b,j}(q)$ have the same constant and linear term in $b$.
\end{theorem}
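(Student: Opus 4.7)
The plan is to extract the $b^0$ and $b^1$ coefficients from the formulas in Theorem \ref{coeff} and reduce the claim to classical statements already on the table in the introduction. The inner sums over compositions $a_1 + \dots + a_j = c$ in that theorem collapse drastically at $c = 0$ and $c = 1$: at $c = 0$ the only composition is the all-zero one, contributing the empty product $1$, so both $[b^0][q^n]{C'}_{1-b,j}(q)$ and $[b^0][q^n]HNO_j(q)$ reduce to $\sum_{\lambda \vdash n} 1 = p(n)$ for every $j$, dispatching the constant case in a line.

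For the linear term, exactly one $a_k$ equals $1$ in each admissible composition. Up to the common sign $(-1)^{1}$ that arises in both binomial expansions and therefore cancels from the comparison, Theorem \ref{coeff} gives
\begin{align*}
[b^1][q^n] {C'}_{1-b,j}(q) &= \sum_{k=1}^{j} \frac{1}{k} \sum_{\lambda \vdash n} \gamma_{\geq k}(\lambda), \\
[b^1][q^n] HNO_j(q) &= \sum_{k=1}^{j} \frac{1}{k^2} \sum_{\lambda \vdash n} \mathcal{H}_k(\lambda).
\end{align*}
Since the claimed equality must hold for every $j$, differencing the truncations at $j$ and $j-1$ forces the term-by-term relation $\frac{1}{k^2} \mathcal{H}_k(n) = \frac{1}{k} \gamma_{\geq k}(n)$, equivalently $\mathcal{H}_k(n) = k \cdot \gamma_{\geq k}(n)$. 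The Bacher-Manivel identity $\nu_k(n) = \gamma_{\geq k}(n)$ recorded in Section \ref{sec:Definitions} rewrites this as $\mathcal{H}_k(n) = k \cdot \nu_k(n)$, which is precisely Proposition 2 (Bessenrodt, Bacher-Manivel) from the introduction. So I would cite those two classical facts and conclude.

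The main obstacle in this proof is really not an obstacle at all: once the coefficient formulas of Theorem \ref{coeff} are in hand, the linear case falls out of two results already credited in the introduction. The only bookkeeping point worth flagging is that the overall $(-1)^c$ sign produced by the binomial expansions appears uniformly on both sides of the comparison and therefore drops out. The nontrivial work of the paper lies in the quadratic case treated in the next section, where the analogous reduction produces identities that do not follow from any single prior statement and require genuinely new combinatorial input.
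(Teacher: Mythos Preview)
Your proposal is correct and follows essentially the same route as the paper: invoke Theorem~\ref{coeff}, observe that the $c=0$ case collapses to $p(n)$ on both sides, and for $c=1$ reduce to the termwise identity $\mathcal{H}_k(n) = k\,\gamma_{\geq k}(n) = k\,\nu_k(n)$, which is exactly the combination of Proposition~2 and the Bacher--Manivel relation that the paper cites. Your remark about the common $(-1)^c$ sign is a fair bookkeeping point the paper leaves implicit, and your ``differencing'' sentence, while phrased a bit backwards (it reads as assuming the conclusion), is logically harmless since you immediately supply the termwise identity from the cited classical facts.
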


\begin{proof}
The proof is straightforward using Theorem~\ref{coeff}. The constant term $c=0$ is simply $p(n)$, as we get a sum of 1 over the partitions of $n$ for both. For the linear term, expand both expressions at $c=1$:

\begin{align*}
\left[b^{1}\right] \left[q^n\right] {C'}_{1-b,j} (q,b) &= \sum_{\lambda \vdash n} \sum_{a_1 + \dots + a_j = 1} (\frac{1}{1})^{a_1} \binom{\gamma_{\geq 1}}{a_1} * \dots * (\frac{1}{j})^{a_j} \binom{\gamma_{\geq j}}{a_j} \\
&= \sum_{\lambda \vdash n} \frac{1}{1} \gamma_{\geq 1} + \dots + \frac{1}{j} \gamma_{\geq j} \\\\
\left[b^{1}\right] \left[q^n\right] HNO_j (q,b) &= \sum_{\lambda \vdash n} \sum_{a_1 + \dots + a_j = 1} (\frac{1}{1^2})^{a_1} \binom{{\mathcal{H}}_{1}}{a_1} * \dots * (\frac{1}{j^2})^{a_j} \binom{{\mathcal{H}}_{j}}{a_j} \\
&= \sum_{\lambda \vdash n}  \frac{1}{1^2} {\mathcal{H}}_{1} + \dots + \frac{1}{j^2} {\mathcal{H}}_{j} \\
\end{align*}
The claimed identity now follows from the termwise equality, for $0\leq i \leq j$:

\begin{align*}
\sum_{\lambda \vdash n} \frac{1}{i^2} {\mathcal{H}}_{i} &\stackrel{?}{=} \sum_{\lambda \vdash n} \frac{1}{i} \gamma_{\geq i} \\
\mathcal{H}_i(n) = \sum_{\lambda \vdash n} {\mathcal{H}}_{i} &\stackrel{?}{=} i \sum_{\lambda \vdash n} \gamma_{\geq i} = i * \nu_i
\end{align*}

The identities $\gamma_{\geq i} (n) = \nu_i (n)$ and $\mathcal{H}_i(n) = i*\nu_i$ are well-known, as mentioned in the Introduction, and we are done.
\end{proof}

\section{Quadratic Equivalence}\label{sec:Quadratic}

The next natural step is to consider what the difference is in the quadratic terms of $HNO_j(q)$ and ${C'}_{1-b,j}(q)$, given that they are not equal. We will begin with $j=2$.
Using Theorem~\ref{coeff}, we can determine the quadratic coefficient on the $q^n$ term of both $H_2(q)$ and ${C'}_{1-b,2}(q)$:

\begin{align*}
\left[b^{2}\right] \left[q^n\right] {C'}_{1-b,2} (q) &= \sum_{\lambda \vdash n} \bigg[  \binom{\gamma_{\geq 1}}{2} + \frac{1}{2} \binom{\gamma_{\geq 1}}{1}\binom{\gamma_{\geq 2}}{1} + \frac{1}{4}\binom{\gamma_{\geq 2}}{2} \bigg] \\
\left[b^{2}\right] \left[q^n\right] H_2 (q) &= \sum_{\lambda \vdash n} \bigg[  \binom{{\mathcal{H}}_{1}}{2} + \frac{1}{4} \binom{{\mathcal{H}}_{1}}{1}\binom{{\mathcal{H}}_{2}}{1} + \frac{1}{16}\binom{{\mathcal{H}}_{2}}{2} \bigg]
\end{align*}

The second author conjectured in~\cite{Keith} that the exact term one needs to add to the $b^2$ coefficient on each $q^n$ of ${C'}_{1-b,2}(q)$ to obtain that of $H_2(q)$ is $\frac{1}{16}\gamma_{\geq 4}(n)$. We will prove this.

We first give the bivariate generating function $G_{k}(q,u)$ defined by Bacher and Manivel in~\cite{BM}.  Let $g_k(n,t)$ be the number of partitions of $n$ with exactly $t$ part sizes having multiplicity at least $k$.  We have

\begin{align*}
G_k(q,u) &:= \sum_{n=0}^\infty \sum_{t=0}^\infty g_k(n,t) q^n u^t \\
&= \prod_{i=1}^\infty (1+q^i+q^{2i}+\dots+q^{(k-1)i} + u(q^{ki} + q^{(k+1)i} + \dots)) \\
&= \prod_{i=1}^\infty \bigg( \frac{1-q^{ki}}{1-q^i} + u \frac{q^{ki}}{1-q^i} \bigg) = \frac{1}{(q)_\infty} \prod_{i=1}^\infty (1 - q^{ki} + uq^{ki}) \\
&= \frac{1}{(q)_\infty} \prod_{i=1}^\infty (1 + (u-1)q^{ki}).
\end{align*}

We can now obtain the generating function for $\sum_{\lambda \vdash n}\binom{\gamma_{\geq k} (\lambda)}{d}$:

\begin{theorem}\label{GamgeqKChooseD} With $G_{k}(q,u)$ as above,
\begin{align*}
\sum_{n=0}^\infty q^n \sum_{\lambda \vdash n} \binom{\gamma_{\geq k}(\lambda)}{d}
&= \frac{1}{d!} \pdv[d]{}{u} G_k(q,u) \bigg\rvert_{u=1}
&= \frac{1}{d!} \pdv[d]{}{u} \bigg[ \frac{1}{(q)_\infty} \prod_{i=1}^{\infty}(1+(u-1)q^{ki})\bigg] \bigg\rvert_{u=1} \\
= \frac{1}{(q)_\infty} \prod_{i=1}^{d} \frac{q^{ik}}{1-q^{ik}}.
\end{align*}
\end{theorem}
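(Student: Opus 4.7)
The first equality is essentially automatic. Since $\gamma_{\geq k}(\lambda) = t$ means $\lambda$ contributes to $g_k(n,t)$, we can write
\[
\sum_{n=0}^\infty q^n \sum_{\lambda \vdash n} \binom{\gamma_{\geq k}(\lambda)}{d} = \sum_{n,t} g_k(n,t) \binom{t}{d} q^n.
\]
The identity $\binom{t}{d} = \frac{1}{d!} \frac{d^d}{du^d} u^t \big|_{u=1}$ (which follows from $\frac{d^d}{du^d}u^t = t(t-1)\cdots(t-d+1)u^{t-d}$) then lets us pull the $\binom{t}{d}$ out as $\frac{1}{d!}\partial_u^d(\cdot)|_{u=1}$ applied to $G_k(q,u)$. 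Substituting the product formula for $G_k$ already derived above yields the second expression.

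The substantive step is evaluating $\frac{1}{d!}\partial_u^d \prod_{i=1}^\infty(1+(u-1)q^{ki})\big|_{u=1}$. The plan is to make the change of variables $v = u-1$, so that $\partial_u = \partial_v$ and the evaluation at $u=1$ becomes evaluation at $v=0$. Then
\[
\frac{1}{d!}\partial_v^d \prod_{i=1}^\infty(1+vq^{ki})\bigg|_{v=0} = [v^d]\prod_{i=1}^\infty(1+vq^{ki}),
\]
since Taylor's theorem at $v=0$ identifies the $d$-th derivative divided by $d!$ with the coefficient of $v^d$.

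Expanding the product as a sum over finite subsets $S \subset \mathbb{Z}_{>0}$ gives
\[
\prod_{i=1}^\infty(1+vq^{ki}) = \sum_{S} v^{|S|} q^{k\sum_{i \in S} i},
\]
so the coefficient of $v^d$ is the generating function for choices of $d$ distinct positive integers $1 \leq i_1 < i_2 < \cdots < i_d$ weighted by $q^{k(i_1+\cdots+i_d)}$. This is exactly the generating function for partitions into $d$ distinct positive parts, evaluated with $q$ replaced by $q^k$. Using the standard identity $\sum_{1 \leq i_1 < \cdots < i_d} x^{i_1+\cdots+i_d} = \frac{x^{d(d+1)/2}}{\prod_{i=1}^d(1-x^i)}$ (proved by the bijection $i_j \mapsto i_j - j$ between strict and unrestricted partitions into at most $d$ parts) with $x = q^k$ yields
\[
[v^d]\prod_{i=1}^\infty(1+vq^{ki}) = \frac{q^{kd(d+1)/2}}{\prod_{i=1}^d(1-q^{ki})} = \prod_{i=1}^d \frac{q^{ik}}{1-q^{ik}}.
\]
Multiplying by the $\frac{1}{(q)_\infty}$ factor carried along from $G_k$ completes the proof. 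There is no real obstacle here; the only care needed is the $v = u-1$ substitution that converts the high-order derivative evaluation into a clean coefficient extraction.
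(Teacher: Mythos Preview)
Your proof is correct and follows essentially the same route as the paper: identify the $d$-th derivative at $u=1$ with the falling factorial to get the first equality, then extract from $\prod_i(1+(u-1)q^{ki})$ the sum over $d$-element sets of positive integers and recognize it as the generating function for partitions into $d$ distinct parts in $q^k$. Your substitution $v=u-1$, which turns the derivative evaluation into a Taylor-coefficient extraction at $v=0$, is a slightly cleaner packaging than the paper's explicit repeated product-rule computation over ordered tuples followed by a $d!$ composition-to-partition conversion, but the underlying argument is the same.
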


\begin{proof}
Recall that given a partition $\lambda$, we denote by $\gamma_{\geq k}$ the number of part sizes with multiplicity at least $k$ in $\lambda$. Taking the derivative of $G_k(q,u)$ with respect to $u$ and setting $u$ equal to 1 yields $\gamma_{\geq k}(\lambda)$ summed over all $\lambda$ of $n$ as the coefficient on $q^n$.
\[
\pdv{}{u}G_k(q,u) \bigg\rvert_{u=1} = \sum_{n=0}^\infty q^n \sum_{\lambda \vdash n} \gamma_{\geq k} (\lambda) .
\]
Then $\pdv[d]{}{u}G_k(q,u) \big\rvert_{u=1}$ yields $\gamma_{\geq k}(\lambda)(\gamma_{\geq k}(\lambda)-1)\dots(\gamma_{\geq k}(\lambda)-d)$ summed over the partitions of $n$ as the coefficient on $q^n$, so we have the following:
\begin{align*}
\pdv[d]{}{u}G_k(q,u) \bigg\rvert_{u=1} &= \sum_{n=0}^\infty q^n \sum_{\lambda \vdash n} \gamma_{\geq k}(\lambda)(\gamma_{\geq k}(\lambda)-1)\dots(\gamma_{\geq k}(\lambda)-d) \\
&= d! \sum_{n=0}^\infty q^n \sum_{\lambda \vdash n} \binom{\gamma_{\geq k}(\lambda)}{d} \\
\end{align*}

Now consider repeated derivatives of $\prod_{i=1}^{\infty} (1+(u-1)q^{ki})$ with respect to $u$. Using the product rule, we get a sum over all $i$ of $q^{ik}$ times the original product, with the $i$th term removed. Doing this for each derivative, we gain a new $q^{ik}$ term each time, removing these from the infinite product. Thus, we have:

\begin{align*}
\pdv[d]{}{u} \bigg[ \prod_{i=1}^{\infty} (1+(u-1)q^{ik}) \bigg] \bigg\rvert_{u=1}
&= \sum\limits_{\substack{(x_1,\dots,x_d) \in \mathbb{N}^d \\ x_i \neq x_j}} q^{k \sum x_i} \frac{\prod_{j=1}^{\infty}(1+(u-1)q^{kj})}{\prod_{x_i} (1+(u-1)q^{ki})} \bigg\rvert_{u=1}\\
&= \sum\limits_{\substack{(x_1,\dots,x_d) \in \mathbb{N}^d \\ x_i \neq x_j}} q^{k \sum x_i}
\end{align*}

This is the generating function for the number of compositions into exactly $d$ distinct parts, magnified by a factor of $k$. Every composition into $d$ distinct parts can be reordered into a partition into distinct parts, with $d!$ compositions corresponding to a single partition. This gives the following:

\begin{align*}
\sum\limits_{\substack{(x_1,\dots,x_d) \in \mathbb{N}^d \\ x_i \neq x_j}} q^{k \sum x_i} &= d! \sum\limits_{\substack{\text{partitions } (y_1,\dots,y_d)\\ \text{into } d \text{ distinct parts}}} q^{k \sum{y_i}} \\
&= d! \frac{q^{k \binom{d+1}{2}}}{ \prod_{i=1}^{d} (1-q^{ki})} = d! \prod_{i=1}^{d} \frac{q^{ki}}{1-q^{ki}}.
\end{align*}

Dividing through by $d!$ gives us our generating function for $\sum_{n=0}^\infty q^n \sum_{\lambda \vdash n} \binom{\gamma_{\geq k}}{d}$.

\end{proof}

From Han's generating function Proposition \ref{HanProp} for hooks of a given length we can obtain the generating functions for $\binom{{\mathcal{H}}_k}{d}$ for any given $k$ and $d$.  In particular, we need the following:

\begin{lemma}
\begin{align*}
\sum_{n=0}^\infty q^n \sum_{\lambda \vdash n} \binom{{\mathcal{H}}_{k}}{2} &= \frac{1}{2}\frac{1}{(q)_\infty} \bigg[\bigg(\frac{kq^{k}}{(1-q^{k})}\bigg) ^2 - \frac{kq^{2k}}{1-q^{2k}}\bigg] \\
\text{ and specifically} \\
\sum_{n=0}^\infty q^n \sum_{\lambda \vdash n} \binom{{\mathcal{H}}_{2}}{2} &= \frac{1}{(q)_\infty} \frac{q^4 (1+3q^2)}{(1-q^2)(1-q^4)}.
\end{align*}
\end{lemma}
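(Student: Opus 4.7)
The plan is to apply Proposition~\ref{HanProp} directly. Since $\binom{a}{2} = \frac{1}{2}\frac{d^2}{du^2}u^a\big|_{u=1}$, differentiating Han's bivariate series $F_k(q,u)$ twice in $u$ and setting $u=1$ gives
$$\sum_{n=0}^\infty q^n \sum_{\lambda\vdash n}\binom{\mathcal{H}_k(\lambda)}{2} \;=\; \frac{1}{2}\,\pdv[2]{}{u}F_k(q,u)\bigg|_{u=1}.$$
Because the prefactor $1/(q)_\infty$ is independent of $u$, the problem collapses to computing $\partial_u^2 P(u)\big|_{u=1}$ where $P(u):=\prod_{i=1}^\infty(1+q^{ki}(u-1))^k$.

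The cleanest route is logarithmic differentiation. From
$$\log P(u) \;=\; k\sum_{i=1}^\infty \log\bigl(1+q^{ki}(u-1)\bigr)$$
one reads off $(\log P)'(u) = \sum_i \frac{kq^{ki}}{1+q^{ki}(u-1)}$ and $(\log P)''(u) = -\sum_i \frac{kq^{2ki}}{(1+q^{ki}(u-1))^2}$. Specializing to $u=1$ gives $P(1)=1$, $(\log P)'(1)=\frac{kq^{k}}{1-q^{k}}$, and $(\log P)''(1)=-\frac{kq^{2k}}{1-q^{2k}}$. Plugging these into $P''(1)=P(1)\bigl[((\log P)'(1))^2+(\log P)''(1)\bigr]$ and dividing by $2$ produces the first claimed identity.

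As a parallel sanity check, one can also write $v=u-1$ and extract the coefficient of $v^2$ from $\prod_i (1+q^{ki}v)^k$. One contribution comes from a single factor supplying $\binom{k}{2}q^{2ki}v^2$, summed over $i$; the other from two distinct factors each supplying $kq^{ki}v$, summed over $i<j$. Using $\sum_{i\neq j}q^{k(i+j)}=\bigl(\frac{q^k}{1-q^k}\bigr)^2-\frac{q^{2k}}{1-q^{2k}}$, the two contributions combine (after accounting for the factor of $\frac{1}{2}$ from unordered pairs and the $\binom{k}{2}=\frac{k^2-k}{2}$ from the first contribution) into exactly the bracketed expression in the statement.

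The specialization to $k=2$ is then purely algebraic: substitute $k=2$, pull out a factor of $q^4$, and combine the two fractions over the common denominator $(1-q^{2})(1-q^{4})$; the numerator collapses to $q^4(1+3q^2)$. There is no genuine obstacle in this lemma; the only step that demands care is the product-rule bookkeeping (equivalently, the ordered-vs-unordered pair count in the $[v^2]$ extraction), which is what produces the minus sign on the $\frac{kq^{2k}}{1-q^{2k}}$ term and ultimately the correct sign cancellation giving $1+3q^2$ in the $k=2$ case.
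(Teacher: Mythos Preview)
Your argument is correct and is exactly the route the paper intends: the paper's proof is the single sentence ``Employ the procedure above on Han's series,'' i.e.\ differentiate $F_k(q,u)$ twice in $u$ and set $u=1$, and you have simply carried out that computation in full via logarithmic differentiation (with the $[v^2]$ extraction as a redundant cross-check). Nothing is missing or different in spirit.
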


\begin{proof} Employ the procedure above on Han's series.
\end{proof}

For the mixed term, we have a curious result. In this case, we will construct a 2:1 map from $\sum_{\lambda \vdash n} \binom{{\mathcal{H}}_{1}}{k} \binom{{\mathcal{H}}_{2}}{1}$to $\sum_{\lambda \vdash n} \binom{\gamma_{\geq 1}}{k}\binom{\gamma_{\geq 2}}{1}$.

\begin{theorem}
For $k>1$,
\[
\sum_{\lambda \vdash n} \binom{{\mathcal{H}}_{1}}{k} \binom{{\mathcal{H}}_{2}}{1} = 2 \sum_{\lambda \vdash n} \binom{\gamma_{\geq 1}}{k} \binom{\gamma_{\geq 2}}{1}
\]
\end{theorem}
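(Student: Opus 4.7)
The plan is to split each size-$2$ hook in $\lambda$ by its shape---type A with arm~$1$, leg~$0$, and type B with arm~$0$, leg~$1$---and to realize the claimed $2{:}1$ map by matching type B hooks to $\gamma_{\geq 2}$ directly, while matching type A hooks to $\gamma_{\geq 2}$ of the conjugate partition.

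First I would record two pointwise identities, both immediate from the arm/leg definition of hook length. A cell with hook length $1$ has arm $=$ leg $= 0$ and is a corner of the Young diagram; corners are in bijection with distinct part sizes, recovering $\mathcal{H}_1(\lambda) = \gamma_{\geq 1}(\lambda)$. A type B cell at position $(i,\ell)$ forces $\lambda_i = \lambda_{i+1} = \ell$ and $\lambda_{i+2} < \ell$, so type B hooks are in bijection with part sizes of multiplicity at least two, giving $\mathcal{H}_2^B(\lambda) = \gamma_{\geq 2}(\lambda)$. Neither statement needs the machinery of the previous theorems.

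The key step is to invoke conjugation of the Young diagram. Since conjugation swaps the arm and leg of every cell, it sends type A hooks of $\lambda$ bijectively to type B hooks of $\lambda'$ and sends corners to corners. Combining with the identities above,
\[
\mathcal{H}_2^A(\lambda) = \mathcal{H}_2^B(\lambda') = \gamma_{\geq 2}(\lambda'), \qquad \mathcal{H}_1(\lambda) = \mathcal{H}_1(\lambda') = \gamma_{\geq 1}(\lambda').
\]
This already packages the promised $2{:}1$ map: given a triple $(\lambda, S, H)$ with $S$ a $k$-subset of corners and $H$ a hook of length $2$, send the type B case to $(\lambda, S^{\flat}, \ell_H)$ (fixing $\lambda$) and the type A case to $(\lambda', (S')^{\flat}, \ell_H')$ (passing to the conjugate), where $S^{\flat}$ denotes the induced $k$-subset of distinct part sizes and $\ell_H$ is the part size of multiplicity $\geq 2$ determined by the corresponding type B hook.

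To finish I would substitute $\mathcal{H}_2 = \mathcal{H}_2^A + \mathcal{H}_2^B$ inside $\sum_{\lambda \vdash n}\binom{\mathcal{H}_1}{k}\binom{\mathcal{H}_2}{1}$ and reindex the type A contribution by $\mu := \lambda'$, using that conjugation is an involution on partitions of $n$ together with $\binom{\mathcal{H}_1(\lambda)}{k} = \binom{\gamma_{\geq 1}(\lambda')}{k}$. Both resulting sums then coincide with $\sum_{\lambda \vdash n}\binom{\gamma_{\geq 1}}{k}\binom{\gamma_{\geq 2}}{1}$, producing the factor of $2$. I expect the only genuine content to be noticing that conjugation is the correct pairing between the two hook types; once that is seen the remainder is bookkeeping, and the hypothesis $k > 1$ in the statement is not actually used by the argument.
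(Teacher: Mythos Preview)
Your argument is correct and is essentially the same 2:1 construction the paper gives: the paper's ``hook of size 2 at the far right end of the next-to-last instance'' is exactly your type~B hook, and its passage to the conjugate partition is exactly your handling of the type~A hooks, so the two proofs differ only in that you phrase the map as an algebraic reindexing via $\mu=\lambda'$ while the paper phrases it as an explicit pairing of marked diagrams. Your observation that the hypothesis $k>1$ is never used is also correct (indeed the paper itself invokes the $k=1$ case in the proof of the Quadratic Equivalence theorem).
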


\begin{proof}

Consider a $k$-tuple of the part sizes of $\lambda$, which is a $k$-tuple of ``part sizes repeated at least once.''  Along with these, choose one part size repeated at least twice.  Denote this part size by marking the hook of size 2 that must exist at the far right end of the next-to-last instance of this part size.  (In the figure in the examples, the shaded square at $(-1,-6)$ has such a hook.)

This selection of $k+1$ boxes in the Ferrers diagram is also perforce a selection of a $k$-tuple of hooks of size 1, along with a hook of size 2.  Pair the original selection of $k+1$ repetitions with this selection of $k+1$ hooks.

Next, take the conjugate $\lambda^{\prime}$ of the original partition, and mark the boxes in conjugate position.  This is a valid choice of hooks but not a valid choice of repetitions, as the hooks of size 2 will no longer be on the ends of their rows but rather on the bottoms of columns, and only outer corners are both.  Hence it is distinct from the previous selection of hooks.  Pair this conjugate choice of hooks with the original selection as well.  The result is a 2:1 matching, and the theorem holds.
\end{proof}

We now have the necessary toolkit to prove the final conjecture in~\cite{Keith}.

\begin{theorem}\label{QuadEq}[Quadratic Equivalence]
For all $n$, the coefficient of $b^2$ in each polynomial coefficient of $q^n$ in $H_2$ exceeds that of ${C'}_{1-b,2}$ by $\frac{1}{16}\gamma_{\geq 4}(n)$.  That is, 
\begin{align*}
\left[b^{2}\right] \left[q^n\right] H_2(q) &= \left[b^2\right] \left[q^n\right]{C'}_{1-b,2}(q) + \frac{1}{16}\gamma_{\geq 4}(n).
\end{align*}
\end{theorem}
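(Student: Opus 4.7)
The plan is to cancel most of $[b^2][q^n]H_2(q)-[b^2][q^n]C'_{1-b,2}(q)$ using the hook/frequency identities established earlier, and then verify the small residual as an identity of rational generating functions in $q$.

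By Theorem \ref{coeff}, the difference splits naturally into three pieces: a pure $1$-index piece $\sum_{\lambda\vdash n}\!\big(\!\binom{\mathcal{H}_1}{2}-\binom{\gamma_{\geq 1}}{2}\!\big)$, a mixed piece $\tfrac14\sum_{\lambda\vdash n}\binom{\mathcal{H}_1}{1}\binom{\mathcal{H}_2}{1}-\tfrac12\sum_{\lambda\vdash n}\binom{\gamma_{\geq 1}}{1}\binom{\gamma_{\geq 2}}{1}$, and a pure $2$-index piece $\tfrac1{16}\sum_{\lambda\vdash n}\binom{\mathcal{H}_2}{2}-\tfrac14\sum_{\lambda\vdash n}\binom{\gamma_{\geq 2}}{2}$. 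The pure $1$-index piece is identically zero via the classical identity $\mathcal{H}_1(\lambda)=\gamma_{\geq 1}(\lambda)$ (hooks of length one are in bijection with distinct part sizes). The mixed piece vanishes too, provided one has $\sum_{\lambda\vdash n}\binom{\mathcal{H}_1}{1}\binom{\mathcal{H}_2}{1}=2\sum_{\lambda\vdash n}\binom{\gamma_{\geq 1}}{1}\binom{\gamma_{\geq 2}}{1}$: this is the $k=1$ analogue of the preceding 2:1 correspondence, supplied by Corollary \ref{H1H2} together with the $\binom{\gamma_{\geq 1}}{1}\binom{\gamma_{\geq 2}}{1}$ generating function produced by Theorem \ref{ManyMults}.

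This reduces the theorem to verifying the residual
$$\frac{1}{16}\sum_{\lambda\vdash n}\binom{\mathcal{H}_2}{2}-\frac{1}{4}\sum_{\lambda\vdash n}\binom{\gamma_{\geq 2}}{2}=\frac{1}{16}\gamma_{\geq 4}(n),$$
which I would verify by passing to generating functions in $q$. The preceding Lemma gives $\sum_n q^n\sum_{\lambda\vdash n}\binom{\mathcal{H}_2}{2}=(q)_\infty^{-1}\tfrac{q^4(1+3q^2)}{(1-q^2)(1-q^4)}$; Theorem \ref{GamgeqKChooseD} at $(k,d)=(2,2)$ gives $(q)_\infty^{-1}\tfrac{q^6}{(1-q^2)(1-q^4)}$; and at $(k,d)=(4,1)$ it gives $\sum_n q^n\gamma_{\geq 4}(n)=(q)_\infty^{-1}\tfrac{q^4}{1-q^4}$. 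After clearing $(q)_\infty^{-1}$ and multiplying by $16$, the claim collapses to
$$\frac{q^4(1+3q^2)-4q^6}{(1-q^2)(1-q^4)}=\frac{q^4(1-q^2)}{(1-q^2)(1-q^4)}=\frac{q^4}{1-q^4},$$
an immediate algebraic check.

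The main obstacle is the mixed-term cancellation at $k=1$: the combinatorial 2:1 map proved earlier is stated only for $k>1$, so one cannot invoke it here directly and must instead route through Corollary \ref{H1H2} and Theorem \ref{ManyMults}. Once that link is in hand, the proof reduces to the above generating-function identity, and the fact that the correction term is exactly $\tfrac{1}{16}\gamma_{\geq 4}(n)$ falls out cleanly from the factorization $(1+3q^2)-4q^2=1-q^2$ in the numerator.
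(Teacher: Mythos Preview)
Your proof is essentially the paper's: the same three-term split, with the partition-wise identity $\mathcal{H}_1(\lambda)=\gamma_{\geq 1}(\lambda)$ handling the first piece, the $2\!:\!1$ correspondence handling the mixed piece, and the identical generating-function check (via the Lemma and Theorem~\ref{GamgeqKChooseD}) handling the residual. Your concern about the hypothesis $k>1$ on the $2\!:\!1$ map is well spotted, but routing through Corollary~\ref{H1H2} does not actually evade it, since that corollary is itself obtained in the paper by applying the $2\!:\!1$ map at $k=1$ together with Theorem~\ref{ManyMults}; the paper's proof of Theorem~\ref{QuadEq} simply invokes the bijection directly, and indeed the bijection argument goes through verbatim for $k=1$ because every hook of length $2$ has arm--leg type exactly one of $(1,0)$ or $(0,1)$, so the stated restriction $k>1$ is inessential.
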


\begin{proof}

To begin, we restate the claim with our expansions so far:

\begin{multline*}
\sum_{\lambda \vdash n} \bigg[  \binom{{\mathcal{H}}_{1}}{2} + \frac{1}{4} \binom{{\mathcal{H}}_{1}}{1}\binom{{\mathcal{H}}_{2}}{1} + \frac{1}{16}\binom{{\mathcal{H}}_{2}}{2} \bigg] \\
\stackrel{?}{=} \sum_{\lambda \vdash n} \bigg[  \binom{\gamma_{\geq 1}}{2} + \frac{1}{2} \binom{\gamma_{\geq 1}}{1}\binom{\gamma_{\geq 2}}{1} + \frac{1}{4}\binom{\gamma_{\geq 2}}{2} \bigg] + \frac{1}{16}\gamma_{\geq 4}(n)
\end{multline*}

The first terms of the left-hand and right-hand sides, $\sum_{\lambda \vdash n} \binom{{\mathcal{H}}_{1}}{2}$ and $\sum_{\lambda \vdash n} \binom{\gamma_{\geq 1}}{2}$, are equal, as ${\mathcal{H}}_{1}$ and $\gamma_{\geq 1}$ both count the number of part sizes within a partition. The second terms are equivalent by our previous bijection. Lastly, we consider the generating functions of each remaining term and check that they are indeed equal:

\begin{align*}
\sum_{n=0}^\infty q^n \sum_{\lambda \vdash n} \binom{{\mathcal{H}}_{2}}{2}  &\stackrel{?}{=} \sum_{n=0}^\infty q^n (\sum_{\lambda \vdash n} 4\binom{\gamma_{\geq 2}}{2}) + \sum_{n=0}^\infty q^n \gamma_{\geq 4} (n) \\
\frac{1}{(q)_\infty} \frac{q^4 (1+3q^2)}{(1-q^2)(1-q^4)} &\stackrel{?}{=} \frac{1}{(q)_\infty} \bigg(\frac{4q^{6}}{(1-q^{2})(1-q^{4})} + \frac{q^4}{1-q^4} \bigg) \\
&= \frac{1}{(q)_\infty} \frac{q^4 (1+3q^2)}{(1-q^2)(1-q^4)}
\end{align*}
\end{proof}

At this point we observe that the procedure outlined for Theorem \ref{GamgeqKChooseD} is easily extendable to any desired collection of multiplicities.  Given $k_1 < k_2 < \dots < k_r$, let $g_{k_1,k_2,\dots,k_r}(n,t_1,t_2,\dots,t_r)$ be the number of partitions of $n$ with $t_1$ multiplicities of at least $k_1$, along with $t_2$ multiplicities of at least $k_2$, etc., and set $G_{k_1,k_2,\dots,k_r} (q,u_1,\dots,u_r) := \sum_{n=0}^\infty g_{k_1,k_2,\dots,k_r}(n,t_1,t_2,\dots,t_r) q^n u_1^{t_1} \dots u_r^{t_r}$.  Then we can follow a procedure completely analogous to that of Theorem \ref{GamgeqKChooseD}.

\begin{theorem}\label{ManyMults}
\begin{multline*} \sum_{n=0}^\infty q^n \sum_{\lambda \vdash n} \binom{\gamma_{\geq k_1}(\lambda)}{d_1}\dots \binom{\gamma_{\geq k_r}(\lambda)}{d_r} \\ = \frac{1}{d_1! \dots d_r!} \pdv[d_r]{}{{u_r}} \dots  \pdv[d_1]{}{{u_1}} G_{k_1,\dots,k_r} (q,u_1,\dots,u_r) \bigg\rvert_{u_1=\dots=u_r=1}
\end{multline*}
\end{theorem}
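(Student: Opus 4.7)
The plan is to extend the derivative-extraction argument of Theorem~\ref{GamgeqKChooseD} to several variables. First I would write $G_{k_1,\dots,k_r}(q,u_1,\dots,u_r)$ explicitly as a product over part sizes. Adopting the conventions $k_0 := 0$ and $k_{r+1} := \infty$, and noting that since $k_1 < \cdots < k_r$ a multiplicity $m$ satisfies $m \geq k_j$ precisely for those $j \leq s$, where $s$ is the unique index with $k_s \leq m < k_{s+1}$, one has
$$G_{k_1,\dots,k_r}(q,u_1,\dots,u_r) = \prod_{i=1}^{\infty} \sum_{s=0}^{r} u_1 u_2 \cdots u_s \sum_{m=k_s}^{k_{s+1}-1} q^{mi}.$$
Expanding this product and collecting monomials, the coefficient of $q^n u_1^{t_1}\cdots u_r^{t_r}$ is exactly $g_{k_1,\dots,k_r}(n, t_1, \dots, t_r)$, since each part size independently contributes to the exponent of $u_j$ iff its multiplicity meets the threshold $k_j$.

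Next I would apply the iterated partial derivative directly to the power-series expansion. For each $j$, repeated differentiation yields $(\partial^{d_j}/\partial u_j^{d_j})\, u_j^{t_j} = t_j(t_j - 1)\cdots(t_j - d_j + 1)\, u_j^{t_j - d_j}$, which evaluates at $u_j = 1$ to $d_j!\binom{t_j}{d_j}$. Derivatives in distinct variables commute and act independently on the monomial $u_1^{t_1}\cdots u_r^{t_r}$, so
$$\left.\pdv[d_r]{}{u_r}\cdots\pdv[d_1]{}{u_1} G_{k_1,\dots,k_r}\right|_{u_1=\cdots=u_r=1} = \sum_{n=0}^{\infty} q^n \sum_{t_1,\dots,t_r} g_{k_1,\dots,k_r}(n, t_1, \dots, t_r) \prod_{j=1}^{r} d_j!\binom{t_j}{d_j}.$$
Regrouping the inner sum as a sum over $\lambda \vdash n$ with $t_j = \gamma_{\geq k_j}(\lambda)$, and dividing through by $d_1!\cdots d_r!$, yields the identity claimed in the theorem.

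This is a direct analogue of the univariate argument of Theorem~\ref{GamgeqKChooseD}, and there is no substantive obstacle. The only point requiring care is verifying that the nested factor $u_1 u_2 \cdots u_s$ correctly encodes the joint count of part sizes meeting each threshold; this follows immediately from the strict ordering $k_1 < \cdots < k_r$. Unlike Theorem~\ref{GamgeqKChooseD}, however, the resulting generating function does not in general collapse to a clean closed form, because repeated differentiation in several variables no longer reduces to a single enumeration of compositions into distinct parts. This is why the statement is phrased as a procedural recipe rather than a closed product expression: for any particular choice of $k_j$ and $d_j$ the computation terminates, but the general answer must be produced algorithmically.
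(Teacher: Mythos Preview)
Your proposal is correct and follows precisely the route the paper indicates: the paper does not spell out a proof of Theorem~\ref{ManyMults} but simply remarks that the procedure of Theorem~\ref{GamgeqKChooseD} extends ``completely analogous[ly]'' to several variables, and your argument is exactly that extension. Your explicit product form for $G_{k_1,\dots,k_r}$ and the observation that mixed partials act independently on the monomial $u_1^{t_1}\cdots u_r^{t_r}$ are the details the paper leaves implicit.
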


This, combined with our 2:1 mapping, allows us to give the generating function for any $\sum_{n=0}^\infty q^n \sum_{\lambda \vdash n} \binom{{\mathcal{H}}_1}{k}\binom{{\mathcal{H}}_2}{1}$.  For instance, we have, as claimed, 

\begin{corollary}\label{H1H2}
$$\sum_{n=0}^\infty q^n \sum_{\lambda \vdash n} \binom{{\mathcal{H}}_1(\lambda)}{1}\binom{{\mathcal{H}}_2(\lambda)}{1} = 2 \frac{1}{(q)_\infty} \frac{q^2+q^4+q^5}{(1-q^2)(1-q^3)}.$$
\end{corollary}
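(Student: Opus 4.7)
The plan is to apply the general Theorem~\ref{ManyMults} with $k_1 = 1$, $k_2 = 2$, $d_1 = d_2 = 1$ to reduce the claim to a computation of a mixed partial derivative of $G_{1,2}(q,u_1,u_2)$ evaluated at $u_1 = u_2 = 1$, and then invoke the $2{:}1$ mapping proved in the theorem preceding Theorem~\ref{QuadEq} (with $k=1$) to convert the $\gamma$-statistic generating function into the desired hook-statistic generating function.

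First, I would write down the explicit product formula
\[
G_{1,2}(q,u_1,u_2) \;=\; \prod_{i=1}^\infty f_i(u_1,u_2), \qquad f_i(u_1,u_2) \;=\; 1 + u_1 q^i + \frac{u_1 u_2 q^{2i}}{1 - q^i},
\]
by listing, for each part size $i$, the three possible contributions (zero parts, exactly one part, at least two parts), weighted by $u_1^{t_1} u_2^{t_2}$ according to the definitions of $\gamma_{\geq 1}$ and $\gamma_{\geq 2}$. A quick sanity check confirms $f_i\bigl|_{u=1} = 1/(1-q^i)$, so that $\prod_i f_i\bigl|_{u=1} = 1/(q)_\infty$, and direct calculation gives
\[
\left.\pdv{f_i}{u_1}\right|_{u=1} = \frac{q^i}{1-q^i}, \qquad \left.\pdv{f_i}{u_2}\right|_{u=1} = \frac{q^{2i}}{1-q^i}, \qquad \left.\frac{\partial^2 f_i}{\partial u_1 \,\partial u_2}\right|_{u=1} = \frac{q^{2i}}{1-q^i}.
\]

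Next I would apply the product rule twice. Splitting the mixed partial into diagonal and off-diagonal contributions gives
\[
\left.\pdv{}{u_2}\pdv{}{u_1}\prod_i f_i\right|_{u=1} = \sum_i \left.\frac{\partial^2 f_i}{\partial u_1\,\partial u_2}\right|_{u=1}\!\!\!\prod_{k \neq i} f_k\bigl|_{u=1} \;+\; \sum_{i\neq j}\left.\pdv{f_i}{u_1}\pdv{f_j}{u_2}\right|_{u=1}\!\!\!\prod_{k \neq i,j} f_k\bigl|_{u=1}.
\]
Each missing factor of $f_k\bigl|_{u=1}$ cancels against a $1/(1-q^i)$ in the numerator derivatives, so the expression collapses nicely to
\[
\frac{1}{(q)_\infty}\left[\sum_i q^{2i} + \sum_{i\neq j} q^{i+2j}\right] = \frac{1}{(q)_\infty}\left[\frac{q^2}{1-q^2} + \frac{q}{1-q}\cdot\frac{q^2}{1-q^2} - \frac{q^3}{1-q^3}\right].
\]

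The remaining step is an algebraic simplification over the common denominator $(1-q^2)(1-q^3)$; I would combine the first two terms into $q^2/\bigl[(1-q)(1-q^2)\bigr]$, rewrite this as $q^2(1+q+q^2)/\bigl[(1-q^2)(1-q^3)\bigr]$, and then subtract $q^3(1-q^2)/\bigl[(1-q^2)(1-q^3)\bigr]$ to obtain $(q^2+q^4+q^5)/\bigl[(1-q^2)(1-q^3)\bigr]$. This establishes the identity for $\binom{\gamma_{\geq 1}}{1}\binom{\gamma_{\geq 2}}{1}$; the factor of $2$ in the corollary then comes immediately from the $2{:}1$ correspondence. The only real obstacle is bookkeeping in the algebraic simplification and keeping the diagonal ($i=j$) correction term straight, which is why I would derive the off-diagonal sum as $\sum_i q^i \sum_j q^{2j} - \sum_i q^{3i}$ rather than try to manipulate the constrained double sum directly.
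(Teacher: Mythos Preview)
Your proposal is correct and follows exactly the route the paper indicates: compute the $\gamma$-side generating function via Theorem~\ref{ManyMults} (your derivative computation and algebraic simplification are both right), then double it using the $2{:}1$ map. One small caveat: the $2{:}1$ theorem is stated in the paper for $k>1$, but its proof works verbatim for $k=1$ (the conjugate selection is still distinct because the marked $2$-hook changes from vertical to horizontal), and the paper itself implicitly uses the $k=1$ case both here and in the proof of Theorem~\ref{QuadEq}.
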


\section{Open Questions}

The following questions immediately suggest themselves.

\begin{enumerate}
\item Theorem \ref{GamgeqKChooseD} gives us a closed form for the count of $\binom{\gamma_{\geq k}}{d}$ for any $k$ and $d$.  Although any single generating function for $\binom{{\mathcal{H}}_k}{d}$ is constructible from Han's generating function, it would be satisfying to have a similar closed form for $\sum_{n=0}^\infty q^n \sum_{\lambda \vdash n} \binom{{\mathcal{H}}_k (\lambda)}{d}$.
\item Likewise, Theorem \ref{ManyMults} gives the joint counts of any desired set of multiplicity thresholds.  On the hook side, Han has a two-variable generating function for one hook length at a time; is there an analogous simple multivariable form for the distribution of multiple hooklengths?  The similarity between Bacher \& Manivel's $G_k(q,u)$ and Han's $F_k(q,u)$ is tantalizing, but extending $F_k$ to more variables does not appear to be so easy.  Experimental computation seems to suggest that a simple product form is not likely.
\item Further exploration of the truncations $HNO_j$ and ${C'}_{1-b,j}$ for higher $j$ will require mappings, or at least identities, involving hooks of size 3 and more.
\item Further exploration of the truncations $HNO_j$ and ${C'}_{1-b,j}$ for higher degrees on $b$ will involve the joint generating functions of multiple $\binom{{\mathcal{H}}_k}{d}$ for $d \geq 1$, among other terms.
\end{enumerate}

The whole array of these truncations should prove to be a field to mine for relationships like those investigated in this paper.  It is also possible that our ${C'}_{1-b,j}$ is not the combinatorially most interesting possible truncation, although it did arise in a natural way.  Whatever truncation method is chosen, a unified theorem giving the associated identities should be a fascinating statement about the interplay between hooks, part sizes, and frequencies in partitions.

\end{document}